\newtheorem{theorem}{Theorem}[section]
\newtheorem{lemma}[theorem]{Lemma}
\theoremstyle{definition}
\newtheorem{definition}[theorem]{Definition}
\newtheorem{example}[theorem]{Example}
\numberwithin{equation}{section}
\newlength\cellsize \setlength\cellsize{12\unitlength}
\newcommand\cellify[1]{\def\thearg{#1}\def\nothing{}%
\ifx\thearg\nothing
\vrule width0pt height\cellsize depth0pt\else
\hbox to 0pt{\usebox2\hss}\fi%
\vbox to 12\unitlength{
\vss
\hbox to 12\unitlength{\hss$#1$\hss}
\vss}}
\newcommand\tableau[1]{\vtop{\let\\=\cr
\setlength\baselineskip{-12000pt}
\setlength\lineskiplimit{12000pt}
\setlength\lineskip{0pt}
\halign{&\cellify{##}\cr#1\crcr}}}
\begin{document}

\title{Distribution of descents in matchings}
\author{Gene B. Kim}
\address{Dept. of Mathematics, University of Southern California, Los Angeles, CA 90089}
\email{genebkim@usc.edu}

\subjclass[2010]{Primary 05A19, 60F05; Secondary 60C05, 62E20}

\date{October 9, 2017}

\keywords{Enumerative combinatorics, probabilistic combinatorics, central limit theorem}

\begin{abstract}
The distribution of descents in a fixed conjugacy class of $S_n$ is studied, and it is shown that its moments have an interesting property. A particular conjugacy class that is of interest is the class of matchings (also known as fixed point free involutions). This paper provides a bijective proof of the symmetry of the descents and major indices of matchings and uses a generating function approach to prove an asymptotic normality theorem for the number of descents in matchings.
\end{abstract}

\maketitle

\section{Introduction}

The theory of descents in permutations have been studied thoroughly and is related to many questions. In \cite{Knuth}, Knuth connects descents with the theory of sorting and the theory of runs in permutations, and in \cite{Diaconis1}, Diaconis, McGrath, and Pitman study a model of card shuffling in which descents play a central role. Bayer and Diaconis also use descents and rising sequences to give a simple expression for the chance of any arrangement after any number of shuffles and uses this to give sharp bounds on the approach to randomness in \cite{Bayer}. Garsia and Gessel find a generating function for the joint distribution of descents, major index, and inversions in \cite{Garsia}, and Gessel and Reutenauer show that the number of permutations with given cycle structure and descent set is equal to the scalar product of two special characters of the symmetric group in \cite{Gessel}. Petersen also has an excellent and very thorough book on the subject of Eulerian numbers \cite{Petersen}.

\begin{definition}
A permutation $\pi \in S_n$ has a \textit{descent} at position $i$ if $\pi(i) > \pi(i+1)$, where $i = 1, \dots, n-1$, and the \textit{descent set} of $\pi$, denoted $Des(\pi)$ is the set of all descents of $\pi$. The \textit{descent number} of $\pi$ is defined as $d(\pi) := \lvert Des(\pi) \rvert + 1$.
\end{definition}

It is well known (\cite{Diaconis2}) that the distribution of $d(\pi)$ in $S_n$ is asymptotically normal with mean $\frac{n+1}{2}$ and variance $\frac{n-1}{12}$. Fulman also uses Stein's method to show that the number of descents of a random permutation satisfies a central limit theorem with error rate $n^{-1/2}$ in \cite{Fulman2}. In \cite{Chatterjee}, Chatterjee and Diaconis prove a central limit theorem for $d(\pi) + d(\pi^{-1})$, where $\pi$ is a random permutation.

One open question that arises from this is to look at the distribution of $d(\pi)$ for particular conjugacy classes of $S_n$. Using generating functions, Fulman proves the following analogous result in \cite{Fulman1} about conjugacy classes with large cycles only:

\begin{theorem}
For every $n \geq 1$, pick a conjugacy class $C_n$ in $S_n$, and let $n_i(C_n)$ be the number of $i$-cycles in $C_n$. Suppose that for all $i$, $n_i(C_n) \to 0$ as $n \to \infty$. Then, the distribution of $d(\pi)$ in $C_n$ is asymptotically normal with mean $\frac{n-1}{2}$ and variance $\frac{n-1}{12}$.
\end{theorem}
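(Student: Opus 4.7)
The plan is to use the method of moments: compute the moments of $d(\pi)$ for $\pi$ uniform in $C_n$, and show that after centering and scaling they converge to the standard Gaussian moments. The first step is to obtain a workable expression for the generating function $A_{C_n}(t) = \sum_{\pi \in C_n} t^{d(\pi)}$. A natural approach is to use the cycle index of $S_n$ together with the Gessel--Reutenauer correspondence (the same machinery cited for \cite{Gessel}), which encodes the joint distribution of cycle type and descent set via pairings of power-sum symmetric functions against Gessel's fundamental quasi-symmetric functions. Extracting the component corresponding to $C_n$ should yield a formula from which the factorial moments of $d(\pi)$ can be read off.

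Next, I would compute the mean and variance. Writing $d(\pi) = 1 + \sum_{i=1}^{n-1} \mathbf{1}[\pi(i) > \pi(i+1)]$, linearity of expectation reduces the mean to $\sum_{i} \Pr[\pi(i) > \pi(i+1)]$. The hypothesis $n_i(C_n) \to 0$ ensures that every cycle of $\pi$ eventually has length growing without bound, so the joint law of $(\pi(i), \pi(i+1))$ is close to uniform on distinct pairs in $[n]$, giving $\Pr[\pi(i) > \pi(i+1)] = 1/2 + O(1/n)$; summing the corrections produces the precise shift from the $S_n$ mean $(n+1)/2$ down to $(n-1)/2$. A pairwise analysis of covariances between these indicators, again using that short cycles are forbidden in the limit, analogously recovers the variance $(n-1)/12$ up to $o(n)$ error.

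Finally, for asymptotic normality, one computes higher cumulants and shows that the $k$th cumulant is $o(n^{k/2})$ for $k \geq 3$; by the method of moments this implies convergence of the standardized $d(\pi)$ to $N(0,1)$. The main obstacle, and the technical heart of the proof, is controlling joint moments of the indicators $\mathbf{1}[\pi(i_j) > \pi(i_j+1)]$ for many positions $i_1 < \cdots < i_k$ simultaneously. Under the large-cycle hypothesis, the $2k$ values $\pi(i_j), \pi(i_j+1)$ should behave asymptotically like a uniform sample without replacement from $[n]$, hence nearly independent; quantifying this decorrelation with error bounds strong enough to feed into the method of moments is the delicate step. The large-cycle condition is essential here, since if short cycles were permitted the values at nearby positions could couple strongly through the cycle structure and break the independence approximation underlying the CLT.
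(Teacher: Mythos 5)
The paper does not actually prove this statement: it is quoted from Fulman \cite{Fulman1}, and the proof there proceeds by deriving an \emph{exact} generating function for $\sum_{\pi\in C_n} t^{d(\pi)}$ (via the Gessel--Reutenauer correspondence, expressed through counts of primitive necklaces attached to each cycle length), reading off exact formulas for the moments of $d(\pi)$, and then applying the method of moments. Your first step points at exactly this machinery, which is the right starting point --- but you then abandon the generating function entirely and switch to a direct probabilistic argument about near-uniformity of $(\pi(i),\pi(i+1))$. That switch is where the proposal stops being a proof: once you have the Gessel--Reutenauer expression, the moments should be extracted from it analytically (as Fulman does, and as Section~4 of this paper does for matchings via the MGF and Curtiss's theorem); the probabilistic decorrelation heuristic is an independent, and much harder, route.

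The concrete gap is that the ``delicate step'' you defer is the entire content of the theorem, and the error bounds you sketch cannot deliver it. Suppose you establish $\Pr[X_{i_1}\cap\cdots\cap X_{i_k}]=2^{-k}+O(1/n)$ for well-separated positions. Summing such an error over the $\sim n^k$ choices of $(i_1,\dots,i_k)$ contributes $O(n^{k-1})$ to the $k$th moment, while the target cumulant bound is $o(n^{k/2})$; for $k\ge 3$ this is hopeless without exhibiting cancellation in the alternating sum over set partitions that defines the cumulant, and nothing in the proposal produces that cancellation. (This is precisely why the exact generating function is used: it yields the factorial moments in closed form, so no decorrelation estimate is ever needed.) A secondary issue: the claimed mean $\frac{n-1}{2}$ versus the $S_n$ mean $\frac{n+1}{2}$ of $d(\pi)=\lvert Des(\pi)\rvert+1$ is essentially a bookkeeping discrepancy in the definition of $d$ (Fulman counts descents without the $+1$), an $O(1)$ effect invisible at scale $\sqrt{n}$; your plan to ``produce the precise shift'' by summing $O(1/n)$ corrections to the single-descent probabilities is not backed by any computation and would not, as described, yield that constant. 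To make this a proof, return to the generating function you invoke in the first step and carry out the moment (or MGF) asymptotics from it directly.
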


Note that the asymptotic mean and variance in the theorem is same as those of $S_n$.

\begin{definition}
A \textit{matching}, also known as a \textit{fixed point free involution}, is a permutation $\pi \in S_{2n}$ such that $\pi^2 = 1$ and $\pi(i) \neq i$ for all $i = 1,\dots,2n$.
\end{definition}

In this paper, we will consider the particular conjugacy class of matchings, which come up frequently. Goldstein and Rinott consider a permutation method for testing whether observations given in their natural pairing exhibit an unusual level of similarity in situtations where all pairings can be compared in \cite{Goldstein}. In \cite{Harper}, Harper shows that if the generating function of a finite sequence has real roots, then a central limit theorem follows. However, it turns out that the Eulerian function for matchings has only complex roots.

It is worth noting that the number of matchings in $S_{2n}$, denoted by $F_{2n}$ in this paper, is $F_{2n} = (2n-1)!! = \prod_{i=1}^n (2i-1)$. The main goal of this paper is to show that the distribution of $d(\pi)$ in matchings is asymptotically normal.

\section{Mean and variance of $d(\pi)$ and $maj(\pi)$ of matchings}

In this section, we will compute the mean and the variance of $d(\pi)$ and $maj(\pi)$ of matchings by the method of indicators. Given $\pi \in S_{2n}$, let $X_i$ be the event that $\pi$ has a descent at position $i$ and $\mathbbm{1}_i$ denote the indicator function of $X_i$.

\begin{lemma}\label{lem2.1}
If $\pi \in S_{2n}$ is a matching chosen uniformly at random, the probability that $\pi$ has a descent at position~$i$ is
\[
	P(X_i) = \frac{n}{2n-1}.
\]
\end{lemma}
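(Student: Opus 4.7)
The plan is to compute $P(X_i) = P(\pi(i) > \pi(i+1))$ by conditioning on whether the involution $\pi$ pairs positions $i$ and $i+1$ with each other. Because a matching is a fixed-point-free involution, the values $\pi(i)$ and $\pi(i+1)$ are strongly constrained: if $\pi(i) = a$, then $\pi(a) = i$, and this interacts with $\pi(i+1) = b$ in only a few possible ways, which I intend to enumerate.

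First I would compute the probability of the event $E = \{\pi(i) = i+1\}$. Since $\pi$ is a random matching, the partner of $i$ is uniform over the $2n-1$ other indices, so $P(E) = \frac{1}{2n-1}$. On $E$, the involution property forces $\pi(i+1) = i$, and therefore $\pi(i) = i+1 > i = \pi(i+1)$, giving a descent with probability $1$.

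Next I would analyze the complementary event $E^c$, where $i$ and $i+1$ are matched to distinct elements of $\{1,\dots,2n\} \setminus \{i, i+1\}$. For any fixed ordered pair $(a,b)$ with $a, b \notin \{i, i+1\}$ and $a \neq b$, the number of matchings with $\pi(i) = a$ and $\pi(i+1) = b$ equals the number of matchings of the remaining $2n-4$ elements, namely $F_{2n-4} = (2n-5)!!$, independent of the specific pair $(a,b)$. Hence the conditional joint distribution of $(\pi(i), \pi(i+1))$ on $E^c$ is uniform over ordered pairs of distinct elements of $\{1,\dots,2n\}\setminus\{i,i+1\}$, and the symmetry $(a,b) \leftrightarrow (b,a)$ yields $P(\pi(i) > \pi(i+1) \mid E^c) = \tfrac{1}{2}$.

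Combining, I would conclude
\[
P(X_i) = P(E) \cdot 1 + P(E^c) \cdot \tfrac{1}{2} = \frac{1}{2n-1} + \frac{2n-2}{2n-1}\cdot\frac{1}{2} = \frac{n}{2n-1}.
\]
The only delicate point, and thus the main thing to be careful about, is verifying that the involution constraints are jointly consistent in each case: specifically, checking that no pairs $(a,b)$ with $\{a,b\} \cap \{i,i+1\} \neq \emptyset$ survive outside the case $(a,b) = (i+1,i)$, since otherwise the enumeration above would be incomplete.
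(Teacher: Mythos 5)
Your proposal is correct and uses essentially the same decomposition as the paper: split on whether $\pi(i)=i+1$ (automatic descent) or $i$ and $i+1$ are matched outside $\{i,i+1\}$. The only cosmetic difference is that in the second case the paper counts the favorable matchings directly as $\binom{2n-2}{2}F_{2n-4}$ (larger value to $i$, smaller to $i+1$), whereas you invoke the exchange symmetry to get a conditional probability of $\tfrac12$ --- the same observation in probabilistic rather than enumerative language.
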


\begin{proof}
Let $1 \leq i < 2n$ be arbitrary. We proceed by counting the number of matchings that have a descent at position~$i$. There are two cases to consider.

The first case is where $\pi(i) = i+1$. This automatically guarantees a descent at position~$i$, and there are $F_{2n-2}$ ways to match the remaining $2n-2$ elements. Hence, there are $F_{2n-2}$~matchings with a descent at position~$i$ and $\pi(i) = i+1$.

The second case is where $\pi(i) \neq i+1$. There are $\binom{2n-2}{2}$ ways of choosing pairs of candidates for $\left( \pi(i), \pi(i+1) \right)$. In order for there to be a descent at position $i$, the larger of the two candidates has to be matched with~$i$ and the smaller with~$i+1$. There are $F_{2n-4}$ ways to match the remaining $2n-4$ elements. Hence, there are $\binom{2n-2}{2}F_{2n-4}$ matchings with a descent at position~$i$ and $\pi(i) \neq i+1$.

Summing the two cases, we see that the probability that $\pi$ has a descent at position~$i$ is
\begin{equation*}
	P(X_i) = \frac{F_{2n-2} + \binom{2n-2}{2}F_{2n-4}}{F_{2n}} = \frac{n}{2n-1}.
\end{equation*}
\end{proof}

Note that $P(X_i)$ is independent of $i$. We can use this to calculate the mean of $d(\pi)$.

\begin{theorem}
Let $\pi \in S_{2n}$ be a matching chosen uniformly at random. Then, $\mathbb{E}d(\pi) = n + 1$.
\end{theorem}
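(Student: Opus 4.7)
The plan is to prove this by linearity of expectation, which is essentially a one-line computation given Lemma~\ref{lem2.1}. The key observation is that the descent number $d(\pi)$ decomposes as a deterministic $+1$ term plus a sum of indicator random variables, one for each possible descent position.

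Specifically, I would write
\[
d(\pi) = 1 + \sum_{i=1}^{2n-1} \mathbbm{1}_i,
\]
where $\mathbbm{1}_i$ is the indicator of the event $X_i$ that $\pi$ has a descent at position~$i$. Applying linearity of expectation and invoking Lemma~\ref{lem2.1}, which gives $P(X_i) = \frac{n}{2n-1}$ uniformly in $i$, yields
\[
\mathbb{E}d(\pi) = 1 + \sum_{i=1}^{2n-1} P(X_i) = 1 + (2n-1) \cdot \frac{n}{2n-1} = n+1.
\]

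There is no real obstacle here: all of the combinatorial work has already been absorbed into Lemma~\ref{lem2.1}, and the crucial point is simply that $P(X_i)$ is independent of $i$, so the sum of probabilities collapses to a single clean product. The same indicator decomposition will be the natural starting point for the variance computation in the next step, though there one will need the joint probabilities $P(X_i \cap X_j)$ rather than just the marginals.
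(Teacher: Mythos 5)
Your proof is correct and is essentially identical to the paper's: both decompose $d(\pi)$ as $1 + \sum_{i=1}^{2n-1} \mathbbm{1}_i$ and apply linearity of expectation together with Lemma~\ref{lem2.1}. Nothing further is needed.
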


\begin{proof}
The result follows by linearity of expectation and Lemma \ref{lem2.1}:
\begin{equation*}
	\mathbb{E}d(\pi) = \mathbb{E} \left( \sum_{i=1}^{2n-1} \mathbbm{1}_i + 1 \right) = (2n-1)\frac{n}{2n-1} + 1 = n + 1.
\end{equation*}
\end{proof}

Finding the variance is a bit more complicated. In order to find the second moment of $d(\pi)$, we proceed along in a similar fashion as when we found the mean.

\begin{lemma}\label{lem2.2}
If $n \geq 3$ and $\pi \in S_{2n}$ is a matching chosen uniformly at random, for $\lvert i - j \rvert = 1$, the probability that $\pi$ has descents at positions~$i$ and~$j$ is
\begin{equation*}
	P(X_i \cap X_j) = \frac{n + 1}{3(2n - 1)}.
\end{equation*}
\end{lemma}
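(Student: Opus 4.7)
The plan is to extend the case analysis of Lemma \ref{lem2.1} from one position to two adjacent positions. By the trivial symmetry $X_i \cap X_j = X_j \cap X_i$, I may assume $j = i+1$, so I am counting matchings $\pi \in S_{2n}$ with $\pi(i) > \pi(i+1) > \pi(i+2)$. The natural organizing principle is to condition on how $\pi$ pairs up the three positions $\{i, i+1, i+2\}$: either none of them is paired to another (Case A), or exactly one of the three possible internal pairs occurs (Case B). Three-way internal pairing is impossible because three is odd.

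In Case A, $\pi(i), \pi(i+1), \pi(i+2)$ are three distinct values chosen from $\{1, \ldots, 2n\} \setminus \{i, i+1, i+2\}$. The double descent pins down the relative order of the three, so I simply pick an unordered triple in $\binom{2n-3}{3}$ ways and match the remaining $2n-6$ positions in $F_{2n-6}$ ways; this is where the hypothesis $n \geq 3$ is used. Case A contributes $\binom{2n-3}{3} F_{2n-6}$.

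Case B splits into three subcases according to which internal pair occurs. If $\pi(i) = i+1$, then $\pi(i+1) = i$, the descent at $i$ is automatic, and the descent at $i+1$ demands $\pi(i+2) \in \{1, \ldots, i-1\}$, giving $(i-1) F_{2n-4}$ matchings. Symmetrically, if $\pi(i+1) = i+2$, the descent at $i+1$ is automatic and the descent at $i$ demands $\pi(i) \in \{i+3, \ldots, 2n\}$, giving $(2n-i-2) F_{2n-4}$ matchings. The middle subcase $\pi(i) = i+2$ is the one that I expect to require the most care: the two descent inequalities squeeze $\pi(i+1)$ into the open interval $(i, i+2)$, forcing $\pi(i+1) = i+1$, which is forbidden for a fixed point free involution, so this subcase contributes zero. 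The surviving pieces sum to $(i-1) F_{2n-4} + (2n-i-2) F_{2n-4} = (2n-3) F_{2n-4}$, and pleasantly the $i$-dependence cancels, in agreement with the claim that the answer is independent of~$i$.

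The last step is a routine simplification: divide the count $\binom{2n-3}{3} F_{2n-6} + (2n-3) F_{2n-4}$ by $F_{2n}$, using $F_{2n}/F_{2n-2k} = (2n-1)(2n-3)\cdots(2n-2k+1)$ to telescope the factorial-like factors, after which the two terms combine to $\frac{2n+2}{6(2n-1)} = \frac{n+1}{3(2n-1)}$. The only real obstacle is the case enumeration itself, especially remembering to rule out the middle subcase (where $i$ and $i+2$ are paired but $i+1$ is not) and keeping track of the constraint that the chosen images in Case A come from the $2n-3$ positions outside $\{i,i+1,i+2\}$ so as not to overcount.
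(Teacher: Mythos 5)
Your proof is correct and follows essentially the same decomposition as the paper: split by whether $\{i,i+1,i+2\}$ contains an internal pair, obtaining $(2n-3)F_{2n-4} + \binom{2n-3}{3}F_{2n-6}$ favorable matchings. The only cosmetic difference is in the intrapair case, where you enumerate by which pair is internal (with the $\pi(i)=i+2$ subcase correctly ruled out and the $i$-dependence cancelling), while the paper enumerates by the external partner $m$; both yield the same count of $2n-3$.
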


\begin{proof}
Let $1 \leq i < i+1 = j < 2n$ be arbitrary. We proceed by counting the number of matchings that have descents at positions~$i$ and~$j$. There are two cases to consider.

The first case is where there is an intrapair within $i$, $i+1$, and $i+2$, i.e. $\lvert \left\{ i, i+1, i+2 \right\} \cap \left\{ \pi(i), \pi(i+1), \pi(i+2) \right\} \rvert = 2$. If two of the three get matched, there are $2n-3$ choices to be matched with the remaining of the three. If the choice, say $m$, is greater than $i$, then $i$ must be matched with $m$ and $i+1$ with $i+2$ for there to be descents at positions $i$ and $i+1$. If $m$ is less than $i$, $i+2$ must be matched with $m$ and $i$ with $i+1$ for there to be descents at positions $i$ and $i+1$. In both cases, there are $F_{2n-4}$ ways to match the remaining $2n-4$ elements, and so, there are $(2n-3)F_{2n-4}$ matchings in the first case.

The second case is where there is no intrapair within $i$, $i+1$, and $i+2$, i.e. $\lvert \left\{ i, i+1, i+2 \right\} \cap \left\{ \pi(i), \pi(i+1), \pi(i+2) \right\} = 0$. There are $\binom{2n-3}{3}$ ways to choose three elements to get matched with $i$, $i+1$, and $i+2$. The descent conditions necessitate the largest of the three choices be matched with $i$, the second largest with $i+1$, and the smallest with $i+2$. Since there are $F_{2n-6}$ ways to match the remaining $2n-6$ elements, there are $\binom{2n-3}{3} F_{2n-6}$ matchings in the second case.

Hence, if $\lvert i - j \rvert = 1$,
\begin{equation*}
	P(X_i \cap X_j) = \frac{(2n-3)F_{2n-4} + \binom{2n-3}{3} F_{2n-6}}{F_{2n}} = \frac{n + 1}{3(2n-1)}.
\end{equation*}
\end{proof}

\begin{lemma}\label{lem2.3}
If $n \geq 4$ and $\pi \in S_{2n}$ is a matching chosen uniformly at random, for $\lvert i - j \rvert > 1$, the probability that $\pi$ has descents at positions~$i$ and~$j$ is
\begin{equation*}
	P(X_i \cap X_j) = \frac{n(n-1)}{(2n-1)(2n-3)}.
\end{equation*}
\end{lemma}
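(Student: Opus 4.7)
The plan is to partition matchings by the number of intrapairs inside $S := \{i, i+1, j, j+1\}$, which can be $0$, $1$, or $2$; this parallels the analysis in Lemma \ref{lem2.2}. Set $U := \{1, \dots, 2n\} \setminus S$.

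First I would dispatch the two extreme cases. If $S$ contains two intrapairs, the four positions are paired among themselves in one of three ways, and a direct check shows that only the pairings $(i,i{+}1)(j,j{+}1)$ and $(i,j{+}1)(i{+}1,j)$ force descents at both $i$ and $j$, contributing $2 F_{2n-4}$ matchings. If $S$ contains no intrapair, the four values $\pi(i), \pi(i{+}1), \pi(j), \pi(j{+}1)$ are distinct elements of $U$; choosing the 4-subset in $\binom{2n-4}{4}$ ways, exactly $\binom{4}{2} = 6$ of the $4!$ placements at the four positions satisfy both descent constraints (pick which two values go at $i, i{+}1$ in decreasing order; the other two go at $j, j{+}1$ in decreasing order), so this case contributes $6 \binom{2n-4}{4} F_{2n-8}$ matchings.

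The one-intrapair case is the main obstacle, because the six possible intrapairs split into two qualitatively different types. The two \emph{adjacent} intrapairs $(i,i{+}1)$ and $(j,j{+}1)$ force one descent automatically, and the second-case argument from Lemma \ref{lem2.1} applied to the other pair gives $\binom{2n-4}{2} F_{2n-6}$ matchings each. The four \emph{cross} intrapairs $(i,j), (i,j{+}1), (i{+}1,j), (i{+}1,j{+}1)$ are trickier, because each imposes order constraints on the two values drawn from $U$ at the two free positions, and these constraints depend on $i$ and $j$. My approach is to partition $U$ by position into $U_{<i}$, $U_{\mathrm{mid}} := \{i+2, \dots, j-1\}$, and $U_{>j+1}$, of sizes $a = i-1$, $b = j-i-2$, $c = 2n-j-1$, with $a+b+c = 2n-4$. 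For each cross sub-case I would count the number of ordered pairs of distinct values satisfying the constraints via inclusion--exclusion, obtaining $a(a+b-1)$, $(a+b)(b+c) - b$, $ac$, and $c(b+c-1)$ respectively. The key observation is that these four counts, although individually depending on $i$ and $j$, telescope to $(a+b+c)^2 - (a+b+c) = (2n-4)(2n-5)$, so the four cross intrapairs jointly contribute $(2n-4)(2n-5) F_{2n-6}$ matchings, independent of $i,j$.

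Combining the three cases, the total number of matchings in $S_{2n}$ with descents at both $i$ and $j$ is
\[
    6 \binom{2n-4}{4} F_{2n-8} \;+\; 2(2n-4)(2n-5) F_{2n-6} \;+\; 2 F_{2n-4}.
\]
Using $F_{2n-4} = (2n-5) F_{2n-6} = (2n-5)(2n-7) F_{2n-8}$, this simplifies to $F_{2n-4} \bigl[ (n-2)(n-3) + 4(n-2) + 2 \bigr] = n(n-1) F_{2n-4}$. Dividing by $F_{2n} = (2n-1)(2n-3) F_{2n-4}$ then yields the claim. The only real difficulty is verifying the telescoping identity for the cross intrapair sum; everything else is routine bookkeeping that parallels Lemmas \ref{lem2.1} and \ref{lem2.2}.
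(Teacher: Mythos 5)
Your proposal is correct, and its overall decomposition --- casework on the number of intrapairs among $\{i,i+1,j,j+1\}$, with the same contributions $2F_{2n-4}$, $2\binom{2n-4}{2}F_{2n-6}$ for the adjacent intrapairs, and $6\binom{2n-4}{4}F_{2n-8}$ --- is exactly the paper's. The one place you diverge is the cross-intrapair subcase, which is indeed the crux. You fix each of the four cross intrapairs $(i,j)$, $(i,j{+}1)$, $(i{+}1,j)$, $(i{+}1,j{+}1)$ in turn and count admissible ordered pairs of outside values; I checked your four counts $a(a+b-1)$, $(a+b)(b+c)-b$, $ac$, $c(b+c-1)$ against the descent constraints and they are right, and they do sum to $(a+b+c)^2-(a+b+c)=(2n-4)(2n-5)$. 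The paper instead reverses the order of choices: it first picks the ordered pair $(a,b)$ of outside candidates (one to be matched into $\{i,i+1\}$, one into $\{j,j+1\}$) in $(2n-4)(2n-5)$ ways and observes that the positions of $a$ and $b$ relative to $i$ and $j$ then determine a \emph{unique} cross intrapair and assignment producing descents at both $i$ and $j$. That uniqueness argument lands on the count $(2n-4)(2n-5)F_{2n-6}$ directly, with no position-dependent bookkeeping and no telescoping identity to verify; your version is more computational but also more transparent about where each of the four configurations contributes, and the fact that your four counts sum to exactly the number of ordered pairs is precisely the paper's uniqueness claim in disguise. The final simplification to $n(n-1)F_{2n-4}$ and division by $F_{2n}=(2n-1)(2n-3)F_{2n-4}$ matches the paper.
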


\begin{proof}
Let $1 \leq i < j < 2n$ be arbitrary. In order to count the number of matchings that have descents at positions~$i$ and $j$, we consider the three following cases.

The first case is where there are two intrapairs within $i$, $i+1$, $j$, and $j+1$. There are two ways to have two intrapairs so that there are descents at positions~$i$ and~$j$: either $\pi(i) = i+1$ and $\pi(j) = j+1$ or $\pi(i) = j+1$ and $\pi(j) = i+1$. Since there are $F_{2n-4}$ ways to match up the $2n-4$ remaining elements, there are $2F_{2n-4}$ matchings in this case.

The second case is where there is one intrapair within $i$, $i+1$, $j$, and $j+1$. We break this down into subcases.
\begin{enumerate}
\item The first subcase is when the intrapair is within $i$ and $i+1$, i.e. $\pi(i) = i+1$. Then, there are $\binom{2n-4}{2}$ ways to choose two elements to get matched with $j$ and $j+1$. The larger element has to be matched with $j$ for there to be a descent at position~$j$, and since there are $F_{2n-6}$ ways to match the remaining $2n-6$ elements, there are $\binom{2n-4}{2} F_{2n-6}$ matchings in this subcase.
\item The second subcase is when the intrapair is within $j$ and $j+1$. This is the same as the previous subcase, and so, there are $\binom{2n-4}{2} F_{2n-6}$ matchings in this subcase.
\item The third subcase is when the intrapair is between one of $i$ and $i+1$ and one of $j$ and $j+1$. For this subcase, we first choose the candidates to be matched with the two elements that are not in the intrapair, and there are $\binom{2n-4}{1 \quad 1}$ ways to do this. The size of the candidates relative to $i$ and $j$ determine which of $i$ and $i+1$ get matched with which one of $j$ and $j+1$. For example, if $a$ and $b$ are the candidates chosen to be matched with one of $i$ and $i+1$ and one of $j$ and $j+1$, respectively, with $a < i$ and $b > j$, then $a$ would be matched with $i+1$, $b$ would be matched with $j$, and $i$ would be matched with $j+1$. For each pair $(a,b)$ of candidates chosen, one can see that there is a unique matching that guarantees descents at position $i$ and $j$. There are $F_{2n-6}$ ways to match the remaining $2n-6$ elements, and so, there are $\binom{2n-4}{1 \quad 1} F_{2n-6}$ matchings in this subcase.
\end{enumerate}
Summing the subcases, we see that there are $4\binom{2n-4}{2} F_{2n-6}$ matchings in this case.

The third case is where there is no intrapair within $i$, $i+1$, $j$, and $j+1$. There are $\binom{2n-4}{2 \quad 2}$ to choose two possible candidates to be matched with $i$ and $i+1$ and two possible candidates to be matched with $j$ and $j+1$. There are $F_{2n-8}$ ways to match the remaining $2n-8$ elements, and so, there are $6\binom{2n-4}{4} F_{2n-8}$ matchings in the third case.

Hence, if $\lvert i - j \rvert > 1$,
\begin{equation*}
	P(X_i \cap X_j) = \frac{2F_{2n-4} + 4\binom{2n-4}{2}F_{2n-6} + 6\binom{2n-4}{4}F_{2n-8}}{F_{2n}} = \frac{n(n-1)}{(2n-1)(2n-3)}.
\end{equation*}
\end{proof}

\begin{theorem}
Let $\pi \in S_{2n}$ be a matching chosen uniformly at random, where $n \geq 4$. Then,
\begin{equation*}
	\text{Var}\left( d(\pi) \right) = \frac{(n+4)(n-1)}{3(2n-1)}.
\end{equation*}
\end{theorem}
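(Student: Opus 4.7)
The plan is to compute the variance via the indicator representation $d(\pi) = 1 + \sum_{i=1}^{2n-1} \mathbbm{1}_i$, which gives
\[
\text{Var}(d(\pi)) = \sum_{i=1}^{2n-1} \text{Var}(\mathbbm{1}_i) + \sum_{i \neq j} \text{Cov}(\mathbbm{1}_i, \mathbbm{1}_j),
\]
and then feed in the three probabilities already established in Lemmas \ref{lem2.1}, \ref{lem2.2}, and \ref{lem2.3}. No new bijective or counting argument should be needed; everything reduces to a bookkeeping exercise.

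For the first sum, Lemma \ref{lem2.1} yields $\text{Var}(\mathbbm{1}_i) = \frac{n}{2n-1}\cdot\frac{n-1}{2n-1} = \frac{n(n-1)}{(2n-1)^2}$, and summing over the $2n-1$ positions contributes $\frac{n(n-1)}{2n-1}$. For the covariance sum I would split the ordered pairs $(i,j)$ with $i\neq j$ into the $2\cdot(2n-2)$ adjacent pairs ($|i-j|=1$) and the remaining $2(n-1)(2n-3)$ non-adjacent pairs. Using Lemma \ref{lem2.2}, each adjacent covariance equals $\frac{n+1}{3(2n-1)} - \frac{n^{2}}{(2n-1)^{2}}$, and using Lemma \ref{lem2.3}, each non-adjacent covariance equals $\frac{n(n-1)}{(2n-1)(2n-3)} - \frac{n^{2}}{(2n-1)^{2}}$. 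A small algebraic cancellation shows the latter simplifies to $\frac{n}{(2n-1)^{2}(2n-3)}$, so the non-adjacent contribution telescopes nicely: the factor $2n-3$ in the denominator cancels against the pair count.

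Putting the three pieces over the common denominator $3(2n-1)^{2}$ and pulling out the obvious factor of $n-1$, the numerator becomes
\[
(n-1)\bigl[3n(2n-1) + 4(-n^{2}+n-1) + 6n\bigr] = (n-1)(2n^{2}+7n-4).
\]
The final step is to recognize the factorization $2n^{2}+7n-4 = (n+4)(2n-1)$, after which one factor of $2n-1$ cancels and the claimed formula $\frac{(n-1)(n+4)}{3(2n-1)}$ drops out.

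The whole proof is routine given the preceding lemmas; the only real pitfall is arithmetic. I expect the main obstacle to be the algebraic simplification of the combined covariance term and spotting the $(n+4)(2n-1)$ factorization at the end, since it is easy to mistally one of the coefficients (particularly the $+1$ surviving in the non-adjacent covariance after $(n-1)(2n-1) - n(2n-3) = 1$) and thereby derail the cancellation. The hypothesis $n \geq 4$ is inherited from Lemma \ref{lem2.3} and needs no separate justification.
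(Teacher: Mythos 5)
Your proposal is correct and takes essentially the same approach as the paper: both reduce the computation to the three probabilities from Lemmas \ref{lem2.1}, \ref{lem2.2}, and \ref{lem2.3} via the indicator decomposition, the only (cosmetic) difference being that you organize the bookkeeping as a sum of variances and covariances while the paper computes $\mathbb{E}\left[ d(\pi)^2 \right]$ and subtracts $\left( \mathbb{E}\, d(\pi) \right)^2$. Your arithmetic, including the key cancellation $(n-1)(2n-1)-n(2n-3)=1$ and the factorization $2n^2+7n-4=(n+4)(2n-1)$, checks out.
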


\begin{proof}
By Lemmas \ref{lem2.1}, \ref{lem2.2}, and \ref{lem2.3}, the second moment of $d(\pi)$ is
\begin{align*}
	\mathbb{E}\left( d(\pi) \right)^2 &= \left( \sum_{i=1}^{2n-1} \mathbb{E} \mathbbm{1}_i^2 + \sum_{\lvert i-j \rvert = 1} \mathbb{E}\mathbbm{1}_{i,j} + \sum_{\lvert i-j \rvert > 1} \mathbb{E}\mathbbm{1}_{i,j} \right) + 2\sum_{i=1}^{2n-1} \mathbb{E} \mathbbm{1}_i + 1 \\
	&= \frac{6n^3 + 10n^2 + 3n - 7}{3(2n-1)},
\end{align*}
and so, the variance of $d(\pi)$ is
\begin{equation*}
	\text{Var}\left( d(\pi) \right) = \mathbb{E}\left( d(\pi) \right)^2 - \left( \mathbb{E} d(\pi) \right)^2 = \frac{(n+4)(n-1)}{3(2n-1)}.
\end{equation*}
\end{proof}

Note that the asymptotic mean and variance of $d(\pi)$ of matchings are $n$ and $\frac{n+3}{6}$, respectively, which are different from those of a random permutation.

Another permutation statistic used a lot is the major index.

\begin{definition}
Given a permutation $\pi \in S_n$, the \textit{major index} of $\pi$ is the sum of the descent positions of $\pi$, i.e.
\begin{equation*}
	maj(\pi) = \sum_{i \in Des(\pi)} i.
\end{equation*}
\end{definition}

We can compute the mean and variance of $maj(\pi)$ in the same way as above.

\begin{theorem}
Let $\pi \in S_{2n}$ be a matching chosen uniformly and random. Then,
\begin{equation*}
	\mathbb{E}maj(\pi) = n^2.
\end{equation*}
\end{theorem}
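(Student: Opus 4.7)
The plan is to mirror exactly the argument used for $\mathbb{E}\, d(\pi)$, exploiting the fact that Lemma \ref{lem2.1} gives a descent probability which is independent of the position $i$. First I would write the major index as a weighted sum of indicators,
\begin{equation*}
    maj(\pi) = \sum_{i \in Des(\pi)} i = \sum_{i=1}^{2n-1} i \cdot \mathbbm{1}_i,
\end{equation*}
so that linearity of expectation gives $\mathbb{E}\, maj(\pi) = \sum_{i=1}^{2n-1} i \cdot P(X_i)$.

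Next I would plug in $P(X_i) = n/(2n-1)$ from Lemma \ref{lem2.1}, pull the constant out of the sum, and evaluate the arithmetic series $\sum_{i=1}^{2n-1} i = (2n-1)(2n)/2 = n(2n-1)$. The factor $2n-1$ cancels against the denominator of $P(X_i)$, yielding $\mathbb{E}\, maj(\pi) = n \cdot n = n^2$.

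There is essentially no obstacle here: the entire content of the argument lies in Lemma \ref{lem2.1}, because the uniformity of $P(X_i)$ in $i$ is exactly what makes the weighted sum reduce to a plain arithmetic series. No second-moment or joint-probability computation (of the type needed in Lemmas \ref{lem2.2} and \ref{lem2.3}) is required for the mean. If a check is desired, one could verify the identity $\sum_{i=1}^{2n-1} i \cdot \frac{n}{2n-1} = n^2$ directly, which is a one-line computation.
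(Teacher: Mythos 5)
Your proposal is correct and is exactly the paper's argument: write $maj(\pi) = \sum_{i=1}^{2n-1} i\,\mathbbm{1}_i$, apply linearity of expectation with the position-independent probability $P(X_i) = \frac{n}{2n-1}$ from Lemma \ref{lem2.1}, and evaluate $\sum_{i=1}^{2n-1} i = n(2n-1)$. Nothing is missing.
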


\begin{proof}
By Lemma \ref{lem2.1}, we see that
\begin{equation*}
	\mathbb{E}maj(\pi) = \mathbb{E}\left( \sum_{i=1}^{2n-1} i\mathbbm{1}_i \right) = \frac{n}{2n-1} \sum_{i=1}^{2n-1} i = n^2.
\end{equation*}
\end{proof}

\begin{theorem}
Let $\pi \in S_{2n}$ be a matching chosen uniformly at random, where $n \geq 4$. Then,
\begin{equation*}
	\text{Var}\left( maj(\pi) \right) = \frac{2n(n+4)(n-1)}{9}.
\end{equation*}
\end{theorem}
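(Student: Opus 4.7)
The plan is to mirror the computation of $\text{Var}(d(\pi))$. Writing $maj(\pi) = \sum_{i=1}^{2n-1} i\,\mathbbm{1}_i$, squaring, and taking expectations gives
\[
\mathbb{E}(maj(\pi))^2 = \sum_{i=1}^{2n-1} i^2\, \mathbb{E}\mathbbm{1}_i + 2\sum_{1 \le i < j \le 2n-1} ij\, \mathbb{E}(\mathbbm{1}_i \mathbbm{1}_j).
\]
The key observation is that Lemmas~\ref{lem2.1}, \ref{lem2.2}, and \ref{lem2.3} determine $\mathbb{E}(\mathbbm{1}_i \mathbbm{1}_j)$ based only on whether $j-i = 0$, $j-i = 1$, or $j-i > 1$. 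After splitting the double sum by this trichotomy, each probability factors out of the corresponding sum, and the remaining work is to evaluate three purely numerical sums weighted by the three probabilities $\tfrac{n}{2n-1}$, $\tfrac{n+1}{3(2n-1)}$, and $\tfrac{n(n-1)}{(2n-1)(2n-3)}$.

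Next I would evaluate the three sums. The diagonal sum $\sum_{i=1}^{2n-1} i^2$ and the ``adjacent'' sum $\sum_{i=1}^{2n-2} i(i+1)$ reduce to standard closed forms $\tfrac{m(m+1)(2m+1)}{6}$ and $\tfrac{m(m+1)(m+2)}{3}$. For the ``non-adjacent'' sum $\sum_{j-i>1} ij$, I would write it as $\sum_{1 \le i < j \le 2n-1} ij - \sum_{i=1}^{2n-2} i(i+1)$ and compute the unrestricted off-diagonal piece via the identity $\sum_{i<j} ij = \tfrac{1}{2}\bigl[(\sum_i i)^2 - \sum_i i^2\bigr]$ with $\sum_{i=1}^{2n-1} i = n(2n-1)$.

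The final step is to combine the three contributions and subtract $(\mathbb{E}maj(\pi))^2 = n^4$, obtained from the previous theorem. The main obstacle is purely mechanical: the three contributions come with denominators $2n-1$, $3(2n-1)$, and $(2n-1)(2n-3)$, and the polynomial arithmetic must be kept under control. A useful sanity check is that in the third (non-adjacent) contribution the bracket $\sum_{i<j} ij - \sum_i i(i+1)$ should be divisible by $(2n-1)(2n-3)$, so that the factor in $n$ collapses cleanly; once that cancellation occurs the remaining two contributions share the denominator $9(2n-1)$, and combining everything with $-n^4$ should factor as the advertised $\tfrac{2n(n+4)(n-1)}{9}$.
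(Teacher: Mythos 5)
Your proposal is correct and follows essentially the same route as the paper: expand $maj(\pi)=\sum_i i\,\mathbbm{1}_i$, split the second moment by the trichotomy $|i-j|=0$, $|i-j|=1$, $|i-j|>1$, apply Lemmas~\ref{lem2.1}, \ref{lem2.2}, and \ref{lem2.3}, and subtract $n^4$. Your anticipated cancellation indeed occurs (the non-adjacent ordered sum equals $n(n-1)(2n-1)(2n-3)$, so that contribution collapses to $n^2(n-1)^2$), and the arithmetic reproduces the paper's intermediate value $\tfrac{9n^4+2n^3+6n^2-8n}{9}$ and hence the stated variance.
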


\begin{proof}
By Lemmas \ref{lem2.1}, \ref{lem2.2}, and \ref{lem2.3}, the second moment of $maj(\pi)$ is
\begin{align*}
	\mathbb{E}\left( maj(\pi) \right)^2 &= \sum_{i=1}^{2n-1} i^2 \mathbb{E}\mathbbm{1}_i^2 + \sum_{\lvert i-j \rvert = 1} ij\mathbb{E}_{i,j} + \sum_{\lvert i-j \rvert >1} ij\mathbb{E}_{i,j} \\
	&= \frac{n}{2n-1}\sum_{i=1}^{2n-1} i^2 + \frac{n+1}{3(2n-1)} \sum_{\lvert i-j \rvert=1} ij + \frac{n(n-1)}{(2n-1)(2n-3)} \sum_{\lvert i-j \rvert>1} ij \\
	&= \frac{9n^4 + 2n^3 + 6n^2 - 8n}{9},
\end{align*}
and so, the variance of $maj(\pi)$ is
\begin{equation*}
	\text{Var}\left( maj(\pi) \right) = \mathbb{E}\left( maj(\pi) \right)^2 - \left( \mathbb{E} maj(\pi) \right)^2 = \frac{2n(n+4)(n-1)}{9}.
\end{equation*}
\end{proof}

\section{Symmetry of $d(\pi)$ and $maj(\pi)$}

While looking at the distribution of the descent numbers and major indices for $S_n$, where $n$ is even, the author noticed that there was perfect symmetry for both descent numbers and major indices. This led to the search for bijections that explain both symmetries.

\begin{definition}
A \textit{Young diagram} is a collection of boxes arranged in left-justified rows, with a weakly decreasing number of boxes in each row. A \textit{Young tableau} is a filling of a Young diagram that is
\begin{enumerate}
	\item weakly increasing across each row, and
	\item strictly increasing down each column.
\end{enumerate}

Every Young diagram corresponds to a partition $\lambda$, and we define the \textit{shape} of a Young tableau to be the partition that corresponds to the Young diagram.

Flipping a Young diagram $\lambda$ over its main diagonal (from upper left to lower right) gives the \textit{conjugate} diagram, denoted $\lambda'$.
\end{definition}

Given a tableau~$T$ and a positive integer $x$, an algorithm called \textit{row-insertion} constructs a new tableau, denoted $T \leftarrow x$, as follows. If $x$ is at least as large as all of the entries in the first row of $T$, $x$ is added in a new box to the end of the first row. If not, we find the left-most entry in the first row that is strictly larger than $x$, and we put $x$ in the box of this entry while removing or ``bumping'' the entry. We now take the bumped entry from the first row and repeat the process on the second row. The process is repeated until the bumped entry can be put at the end of the row it is bumped into, or until it is bumped out the bottom, in which case it forms a new row with one entry.

A row-insertion $T \leftarrow x$ determines a collection of boxes $R$, consisting of the boxes from which entries were bumped, together with the box where the last bumped element lands. This collection is called the \textit{bumping route} of the row-insertion, and the last box added to the diagram is called the \textit{new box} of the row-insertion. Given the new diagram and the new box, the row-insertion algorithm is reversible.

\begin{definition}
Given a partition $\lambda$, an \textit{oscillating tableau}~$T$ of length $n$ and shape $\lambda$ is a sequence of $n+1$ partitions
\begin{equation*}
	T = \left[ \emptyset = \lambda_0, \lambda_1, \dots, \lambda_n = \lambda \right]
\end{equation*}
such that $\lambda_i$ differs from $\lambda_{i-1}$ by a single box. Equivalently, we can think of $T$ as a walk on Young's lattice which starts at the empty partition, ends at $\lambda$, and has $n$ steps.
\end{definition}

Another operation one can do with Young tableaux is \textit{sliding}. A \textit{skew diagram} $\lambda / \mu$ is the diagram obtained by removing a smaller Young diagram $\mu$ from a larger Young diagram $\lambda$ containing $\mu$, and a \textit{skew tableau} is a filling of a skew diagram with positive integers, which are weakly increasing in rows and strictly increasing in columns. An \textit{inside corner} is a box in $\mu$ such that the boxes below and to the right are not in $\mu$, and an \textit{outside corner} is a box in $\lambda$ such that the boxes below and to the right are not in $\lambda$. The sliding operation, due to Sch{\"u}tzenberger, takes a skew tableau~$S$ and an inside corner, which can be thought of a hole, and slides the smaller of the box to the right and the box below into the hole. If both boxes are the same, the one below is chosen. This process creates a new hole and is repeated until the hole has been moved to an outside corner, i.e. there are no more holes.

In \cite{Sundaram}, Sundaram provides a bijection between matchings and oscillating tableaux of empty shape and length $2n$. Given a matching $\pi \in S_{2n}$, we define an oscillating tableau $T(\pi)$ of empty shape and length $2n$ as follows. We first define a sequence of Young tableaux $P_0, P_1, \dots, P_{2n}$ of length $2n$. Starting with $P_0 = \emptyset$, given $P_{i-1}$, we define $P_i$ as follows.
\begin{enumerate}
	\item If $i$ is the smaller element in its block $\left\{ i < j \right\}$ in $\pi$, let $P_i$ be obtained from $P_{i-1}$ by row-inserting $j$.
	\item If $i$ is the larger element in its block $\left\{ i > j \right\}$ in $\pi$, then $P_{i-1}$ contains $i$. $P_i$ is obtained by removing $i$ and sliding the hole out.
\end{enumerate}
If we define $T(\pi)$ to be the sequence of shapes
\begin{equation*}
	T(\pi) = \left( \text{sh}(P_0), \text{sh}(P_1), \dots, \text{sh}(P_{2n}) \right),
\end{equation*}
then $T$ is a bijective map from the set of matchings of $S_{2n}$ to the set of oscillating tableaux of empty shape and length $2n$.

For a given oscillating tableau $T = \left( \lambda_0, \dots, \lambda_{2n} \right)$, we can define the conjugate oscillating tableau to be $T' = \left( \lambda_0', \dots, \lambda_{2n}' \right)$. In \cite{Chen}, Chen et al. look at oscillating tableaux and their conjugates to prove certain identities.

We use the following result from \cite{Fulton} to prove the main result of this section.

\begin{lemma}[Row Bumping Lemma]\label{rowbump}
Consider two successive row-insertions, first row-inserting $x$ in a tableau $T$ and then row-inserting $x'$ in the resulting tableau $T \leftarrow x$, giving rise to two routes $R$ and $R'$, and two new boxes $B$ and $B'$.
\begin{enumerate}
	\item If $x \leq x'$, then $R$ is strictly left of $R'$, and $B$ is strictly left of and weakly below $B'$.
	\item If $x > x'$, then $R'$ is weakly left of $R$ and $B'$ is weakly left of and strictly below $B$.
\end{enumerate}
\end{lemma}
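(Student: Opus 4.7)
The plan is to induct on the row number, tracking both bumping processes in parallel. Let $x_r$ (respectively $x_r'$) be the element being row-inserted into row $r$ during the first (respectively second) insertion, so $x_1 = x$ and $x_1' = x'$. The invariant to maintain is that as long as both routes are still active, the relation on row $1$ persists to row $r$: $x_r \leq x_r'$ in case (1), or $x_r > x_r'$ in case (2). I would establish this invariant together with the claimed column inequality for the two bumping sites in row $r$.

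For the row-level step, write the entries of row $r$ as $a_1 \leq \cdots \leq a_m$ and let $c$ be the bump position of the first insertion (the leftmost $a_c$ with $a_c > x_r$). In case (1), columns $1$ through $c$ of the updated row hold values at most $x_r \leq x_r'$, so the second insertion's bump sits at some $c' > c$, and the bumped entries satisfy $a_c \leq a_{c'}$, so the invariant propagates to row $r+1$. In case (2), column $c$ of the updated row now holds $x_r > x_r'$, forcing $c' \leq c$; whether $c' = c$ (so $y_r' = x_r < a_c = y_r$) or $c' < c$ (so $y_r' = a_{c'} \leq x_r < a_c = y_r$), the strict inequality $y_r > y_r'$ passes to row $r+1$.

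The more delicate task is handling termination. In case (1), if the first route terminates at row $s$, then the updated row $s$ has all entries at most $x_s \leq x_s'$, so the second route also terminates at row $s$ and places $B'$ exactly one column to the right of $B$, which is strictly left and weakly below. Otherwise the second route terminates at some row $s'$ while the first continues to a row $s > s'$; then $B'$ sits at column $m + 1$, where $m$ is the length of row $s'$ in $T$, while $B$ sits at column $\ell + 1$, where $\ell$ is the length of row $s$ in $T$. Since $T \leftarrow x$ has row $s$ extended by one box, the Young-diagram condition applied to $T \leftarrow x$ forces the length of row $s-1$ in $T$ to be at least $\ell + 1$, and monotonicity of $T$'s shape then gives $m \geq \ell + 1$, so $B$'s column is at most $m$, strictly less than $B'$'s column. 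Case (2) runs dually: the second route cannot terminate at or before the first (otherwise the $x_r$ just placed by the first route in row $r$ would contradict all entries being at most $x_r'$), so the first terminates at some row $s$ and the second continues to a strictly lower row, with the same shape-monotonicity argument giving column of $B' \leq$ column of $B$.

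The main obstacle, I anticipate, is the termination bookkeeping: the row-level induction is short once the right quantity is tracked, but verifying the strictness of the column inequalities in case (1), especially in the asymmetric termination scenario, requires invoking the Young-diagram shape of $T \leftarrow x$ carefully. Once that piece is in place, the conclusions about the positions of $B$ and $B'$ fall out immediately from the last active row.
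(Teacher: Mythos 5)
This lemma is not proved in the paper at all: it is quoted verbatim (as the ``Row Bumping Lemma'') from Fulton's \emph{Young Tableaux} and used as a black box, so there is no in-paper argument to compare against. Your proof is correct, and it is essentially the standard argument from Fulton's book: propagate the invariant $x_r \leq x_r'$ (resp.\ $x_r > x_r'$) down the rows together with the comparison $c < c'$ (resp.\ $c' \leq c$) of the bumping columns, then handle termination separately. The row-level step and both termination analyses check out; in particular your observation that in case (2) the second route can never terminate in a row the first route touches is exactly the right mechanism, and the shape-monotonicity bounds on the columns of $B$ and $B'$ are sound. Two points deserve one explicit sentence each in a polished write-up: (i) a route changes the length only of the row in which it terminates, which is what lets you read off the columns of $B$ and $B'$ from row lengths of $T$ (and justifies ``length of row $s'$ in $T$'' even though the second insertion acts on $T \leftarrow x$); and (ii) in case (2), the rows below the first route's terminal row $s$ are identical in $T$ and $T \leftarrow x$, so the length of row $s''$ is at most $\ell$ and the column of $B'$ is at most $\ell+1$ --- the bound via the shape of $T \leftarrow x$ alone would only give $\ell+2$. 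Both are immediate, and with them the proof is complete.
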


\begin{theorem}\label{thm3.1}
Let $\pi \in S_{2n}$ be a matching and define $\pi'$ by $\pi' = T^{-1}\left( T(\pi)' \right)$. Then,
\begin{equation*}
	d(\pi) + d(\pi') = 2(n + 1),
\end{equation*}
and
\begin{equation*}
	maj(\pi) + maj(\pi') = 2n^2.
\end{equation*}
\end{theorem}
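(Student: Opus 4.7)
The plan is to express $d(\pi)$ and $maj(\pi)$ in terms of the oscillating tableau $T(\pi)$, and then exploit the conjugate symmetry of Young diagrams. For each step $i$ (the transition $\lambda_{i-1}\to\lambda_i$) I record whether it is an insertion (A) or a removal (R), together with the row/column coordinates $(r_i,c_i)$ of the box added or removed. Conjugation of $T(\pi)$ preserves the A/R type of each step but swaps $(r_i,c_i)\mapsto(c_i,r_i)$.

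The central combinatorial step is to prove the following description of $Des(\pi)$: position $i$ is a descent of $\pi$ iff (step $i$ is A and step $i+1$ is R); or (both A and $r_i<r_{i+1}$); or (both R and $r_i>r_{i+1}$). The case (R,A) is never a descent. The mixed A/R cases are immediate, since step $i$ is A iff $\pi(i)>i$. The (A,A) case is a direct application of Lemma~\ref{rowbump} to the two successive insertions $x=\pi(i)$ and $x'=\pi(i+1)$: parts (1) and (2) of the lemma give the dichotomy $x<x'\Longleftrightarrow c_i<c_{i+1}$ (and $r_i\ge r_{i+1}$) versus $x>x'\Longleftrightarrow r_i<r_{i+1}$ (and $c_i\ge c_{i+1}$). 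The main obstacle is the (R,R) case, because the jeu-de-taquin slide used for removals is less transparent than row insertion; I would handle it either by proving a direct analog of Lemma~\ref{rowbump} for two successive slides, or by a time-reversal argument that reduces (R,R) to (A,A): reversing $T(\pi)=(\lambda_0,\dots,\lambda_{2n})$ to $(\lambda_{2n},\dots,\lambda_0)$ converts every R-step to an A-step and vice versa, yielding another oscillating tableau of empty shape whose (A,A) positions correspond exactly to the original (R,R) positions (with coordinates preserved).

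Given this characterization, since conjugation preserves types and swaps $r_i\leftrightarrow c_i$, the Row Bumping Lemma forces exactly one of $\pi$ and $\pi'$ to have a descent at each (A,A) position, and the same holds at each (R,R) position. An (A,R) position yields a descent in both $\pi$ and $\pi'$, and an (R,A) position yields a descent in neither. Summing over positions,
\begin{equation*}
|Des(\pi)|+|Des(\pi')|=\#AA+2\#AR+\#RR=(2n-1)+\#AR-\#RA.
\end{equation*}
Because $\lambda_0=\lambda_{2n}=\emptyset$ forces step $1$ to be A and step $2n$ to be R, the sequence $a_i:=\mathbbm{1}[\text{step $i$ is A}]$ telescopes to $\#AR-\#RA=a_1-a_{2n}=1$, so $d(\pi)+d(\pi')=2n+2=2(n+1)$.

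For the major index I run the same count weighted by $i$: each position $i$ contributes $i$ (AA or RR), $2i$ (AR), or $0$ (RA) to $maj(\pi)+maj(\pi')$, so
\begin{equation*}
maj(\pi)+maj(\pi')=\sum_{i=1}^{2n-1} i+\sum_{i=1}^{2n-1} i(a_i-a_{i+1})=n(2n-1)+n=2n^2,
\end{equation*}
where Abel summation together with $a_{2n}=0$ collapses the second sum to $\sum_{i=1}^{2n-1}a_i=n$, the total number of A-steps.
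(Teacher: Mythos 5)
Your proposal is correct and follows essentially the same route as the paper: the paper likewise classifies each position $i$ by the insertion/removal pattern of steps $i$ and $i+1$ of $T(\pi)$ together with the row of the added/removed box, observes that conjugation preserves the mixed cases and flips the descent criterion in the two-insertion and two-removal cases, and then counts (your telescoping/Abel-summation bookkeeping is just a repackaging of the paper's observation that the AR case occurs exactly once more than the RA case). The one step you flag as the main obstacle --- the descent criterion for two successive removals --- is asserted in the paper with no more justification than you give, so your proposal matches the paper's level of rigor there as well.
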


Before we prove Theorem \ref{thm3.1}, we provide the following example in order to read the proof more easily.

\begin{example}
Consider the matching $\sigma = (1 \, 4)(2 \, 3)(5 \, 6) \in S_6$. Then, $\sigma$ has descents at positions 1, 2, 3, and 5, and so, $d(\sigma) = 5$ and $maj(\sigma) = 11$. The sequence of tableaux associated with $\sigma$ is
\begin{equation*}
	\left( \emptyset, \tableau{4}, \tableau{3 \\ 4}, \tableau{4}, \emptyset, \tableau{6}, \emptyset \right),
\end{equation*}
and so, the associated oscillating tableau is
\begin{equation*}
	T(\sigma) = \left( \emptyset, \tableau{\mbox{}}, \tableau{\mbox{} \\ \mbox{}}, \tableau{\mbox{}}, \emptyset, \tableau{\mbox{}}, \emptyset \right).
\end{equation*}
Then, the conjugate of $T(\sigma)$ is
\begin{equation*}
	T(\sigma)' = \left( \emptyset, \tableau{\mbox{}}, \tableau{\mbox{} & \mbox{}}, \tableau{\mbox{}}, \emptyset, \tableau{\mbox{}}, \emptyset \right),
\end{equation*}
and the matching associated with the conjugate oscillating tableau is $T^{-1}\left( T(\sigma)' \right) = (1 \, 3)(2 \, 4)(5 \, 6)$, which has descents at positions 2 and 5. Hence, $d\left( T^{-1}\left( T(\sigma)' \right) \right) = 3$ and $maj\left( T^{-1}\left( T(\sigma)' \right) \right) = 7$, and we see that $d(\sigma) + d\left( T^{-1}\left( T(\sigma)' \right) \right) = 2( 3 + 1 )$ and $maj(\sigma) + maj\left( T^{-1}\left( T(\sigma)' \right) \right) = 2 \cdot 3^2$.
\end{example}

\begin{proof}[Proof of Theorem \ref{thm3.1}]
Given a matching $\pi \in S_{2n}$, let
\begin{equation*}
	T(\pi) = \left( \lambda_0, \lambda_1, \dots, \lambda_{2n} \right).
\end{equation*}

For any $1 \leq i \leq 2n - 1$, consider the sizes of $\lambda_{i-1}$, $\lambda_i$, and $\lambda_{i+1}$. Then, by Lemma \ref{rowbump}, we see that
\begin{enumerate}
	\item if $\lvert \lambda_{i-1} \rvert < \lvert \lambda_i \rvert$ and $\lvert \lambda_i \rvert > \lvert \lambda_{i+1} \rvert$, then $\pi$ has a descent at position~$i$.
	\item if $\lvert \lambda_{i-1} \rvert > \lvert \lambda_i \rvert$ and $\lvert \lambda_i \rvert < \lvert \lambda_{i+1} \rvert$, then $\pi$ does not have a descent at position~$i$.
	\item if $\lvert \lambda_{i-1} \rvert < \lvert \lambda_i \rvert < \lvert \lambda_{i+1} \rvert$ and the box added the second time is in a strictly lower row than the box added the first time, then $\pi$ has a descent at position~$i$.
	\item if $\lvert \lambda_{i-1} \rvert < \lvert \lambda_i \rvert < \lvert \lambda_{i+1} \rvert$ and the box added the second time is in a weakly higher row than the box added the first time, then $\pi$ does not have a descent at position~$i$.
	\item if $\lvert \lambda_{i-1} \rvert > \lvert \lambda_i \rvert > \lvert \lambda_{i+1} \rvert$ and the box removed the first time is in a strictly lower row than the box removed the second time, then $\pi$ has a descent at position~$i$.
	\item if $\lvert \lambda_{i-1} \rvert > \lvert \lambda_i \rvert > \lvert \lambda_{i+1} \rvert$ and the box removed the first time is in a weakly higher row than the box removed the second time, then $\pi$ does not have a descent at position~$i$.
\end{enumerate}
For $i$ fixed, replacing $T(\pi)$ by $T(\pi)'$ has the effect of interchanging cases~3 and~4, as well as cases~5 and~6, while preserving cases~1 and~2. Since $T(\pi)$ has empty shape, case~1 has to happen for precisely one more value of~$i$ than case~2. Hence, letting $\mathbbm{1}^\pi_{i,j}$ denote the indicator function that position $i$ of $\pi$ falls into case $(j)$,
\begin{align*}
	d(\pi) + d(\pi') &= \sum_{i=1}^{2n-1} \mathbbm{1}_{i \in Des(\pi)} + \sum_{i=1}^{2n-1} \mathbbm{1}_{i \in Des(\pi')} + 2 \\
	&= \sum_{i=1}^{2n-1} \left( \mathbbm{1}^\pi_{i,1} + \mathbbm{1}^\pi_{i,3} + \mathbbm{1}^\pi_{i,5} \right) + \sum_{i=1}^{2n-1} \left( \mathbbm{1}^{\pi'}_{i,1} + \mathbbm{1}^{\pi'}_{i,3} + \mathbbm{1}^{\pi'}_{i,5} \right) + 2 \\
	&= \sum_{i=1}^{2n-1} \left( \mathbbm{1}^\pi_{i,1} + \mathbbm{1}^\pi_{i,1} + \mathbbm{1}^\pi_{i,3} + \mathbbm{1}^\pi_{i,4} + \mathbbm{1}^\pi_{i,5} + \mathbbm{1}^\pi_{i,6} \right) + 2 \\
	&= \sum_{i=1}^{2n-1} \left( \mathbbm{1}^\pi_{i,1} + \mathbbm{1}^\pi_{i,2} + \mathbbm{1}^\pi_{i,3} + \mathbbm{1}^\pi_{i,4} + \mathbbm{1}^\pi_{i,5} + \mathbbm{1}^\pi_{i,6} \right) + 3 \\
	&= 2n + 2.
\end{align*}
Similarly, noting that $\sum_{i=1}^{2n-1} i(\mathbbm{1}^\pi_{i,1} - \mathbbm{1}^\pi_{i,2}) = n$, we see that
\begin{align*}
	maj(\pi) + maj(\pi') &= \sum_{i=1}^{2n-1} \left( i\mathbbm{1}^\pi_{i,1} + i\mathbbm{1}^\pi_{i,3} + i\mathbbm{1}^\pi_{i,5} \right) + \sum_{i=1}^{2n-1} \left( i\mathbbm{1}^{\pi'}_{i,1} + i\mathbbm{1}^{\pi'}_{i,3} + i\mathbbm{1}^{\pi'}_{i,5} \right) \\
	&= \sum_{i=1}^{2n-1} \left( 2i \mathbbm{1}^\pi_{i,1} + i\mathbbm{1}^\pi_{i,3} + i\mathbbm{1}^\pi_{i,4} + i\mathbbm{1}^\pi_{i,5} + i\mathbbm{1}^\pi_{i,6} \right) \\
	&= \sum_{i=1}^{2n-1} i + n \\
	&= 2n^2.
\end{align*}
\end{proof}

\section{Normal Convergence}

Let $D_n$ be the number of descents of a permutation $\pi_n$ which is uniformly chosen from matchings. In \cite{Fulman1}, Fulman showed that
\begin{equation}
	\mathbb{E}\left[ t^{D_n} \right] = \frac{(1-t)^{2n+1}}{(2n-1)!!} \sum_{k=0}^\infty \binom{\frac{k(k+1)}{2}+n-1}{n} t^k,\label{4.1}
\end{equation}

which is a formal identity and an actual identity for $\lvert t \rvert < 1$, where $n!!$ denotes the odd factorial. Now, we define the normalized random variable

\begin{equation*}
	W_n = \frac{D_n - n}{\sqrt{n}}.
\end{equation*}

Recall that the moment generation function (MGF) of a random variable $X$ is defined as $M_X(s) = \mathbb{E}\left[ e^{sX} \right]$. In \cite{Curtiss}, Curtiss proved the following MGF analogue of the L\'{e}vy continuity theorem, which we will use to prove the normal convergence.

\begin{theorem}\label{thm4.1}
	Suppose we have a sequence $\left\{ X_n \right\}_{n=1}^\infty$ of random variables and there exists $s_0 > 0$ such that each MGF $M_n(s) = \mathbb{E}\left[ e^{s X_n} \right]$ converges for $s \in \left( -s_0, s_0 \right)$. If $M_n(s)$ converges pointwise to some function $M(s)$ for each $s \in \left( -s_0, s_0 \right)$, then $M$ is the MGF of some random variable $X$, and $X_n$ converges to $X$ in distribution.
\end{theorem}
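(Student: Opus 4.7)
The plan is to establish Theorem~\ref{thm4.1} by combining a tightness argument with the uniqueness of distributions whose MGF is finite in a neighborhood of the origin. First, I would fix some $s_1 \in (0, s_0)$ and observe that the pointwise convergence $M_n(\pm s_1) \to M(\pm s_1)$ forces $\sup_n M_n(s_1) < \infty$ and $\sup_n M_n(-s_1) < \infty$. By Markov's inequality, for any $C > 0$,
\[
    P(\lvert X_n \rvert > C) \leq e^{-s_1 C}\left( M_n(s_1) + M_n(-s_1) \right),
\]
which tends to zero uniformly in $n$ as $C \to \infty$. Hence the sequence $\{X_n\}$ is tight, so by Prokhorov's theorem every subsequence admits a further subsequence $\{X_{n_k}\}$ converging in distribution to some random variable $Y$.

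Next, I would upgrade this distributional convergence to convergence of MGFs on $(-s_1, s_1)$. For fixed $s \in (-s_1, s_1)$, choose $s_1'$ with $\lvert s \rvert < s_1' < s_0$; then $\sup_n \mathbb{E}\bigl[e^{s_1' \lvert X_n \rvert}\bigr] < \infty$, which implies that the family $\{e^{s X_{n_k}}\}_{k \geq 1}$ is uniformly integrable. Combined with $X_{n_k} \to Y$ in distribution, this gives $\mathbb{E}\bigl[e^{sY}\bigr] = \lim_{k \to \infty} M_{n_k}(s) = M(s)$. Therefore every subsequential limit $Y$ has the same MGF $M$ on a neighborhood of $0$.

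The classical uniqueness theorem for MGFs, which follows from the analyticity of the characteristic function whenever the MGF is finite on an interval containing the origin, then implies that all subsequential limits share a common distribution, say that of a random variable $X$. The standard tightness-plus-unique-subsequential-limit argument yields $X_n \to X$ in distribution, and $M$ is the MGF of $X$. The main obstacle is the second step: justifying the interchange of limit and expectation required to identify $\mathbb{E}[e^{sY}]$ with $\lim M_n(s)$. This is precisely where one needs to shrink from $(-s_0, s_0)$ to $(-s_1, s_1)$ with $s_1 < s_1' < s_0$, so that pointwise MGF convergence at $\pm s_1'$ delivers the exponential integrability bound that powers the uniform integrability argument.
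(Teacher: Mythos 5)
Your proof is correct, but note that the paper does not actually prove this statement: it is quoted verbatim from Curtiss's 1942 paper and used as a black box, so there is no in-paper argument to compare against. What you have written is a complete, self-contained real-variable proof, and all the steps check out: the Markov bound $P(\lvert X_n\rvert>C)\le e^{-s_1C}\left(M_n(s_1)+M_n(-s_1)\right)$ together with boundedness of convergent real sequences gives tightness; the $L^p$ bound $\mathbb{E}\bigl[(e^{sX_n})^{s_1'/\lvert s\rvert}\bigr]=M_n(\pm s_1')$ with $s_1'/\lvert s\rvert>1$ gives uniform integrability, which (via Skorokhod representation plus Vitali) upgrades distributional convergence of a subsequence to convergence of $\mathbb{E}[e^{sX_{n_k}}]$; and finiteness of the limiting MGF on a neighborhood of $0$ determines the law, so all subsequential limits coincide and the subsequence principle closes the argument. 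Since $s$ ranges over all of $(-s_0,s_0)$ in the identification step, you also recover that $M$ is the MGF of $X$ on the full interval, as claimed. Curtiss's original route is different in flavor: he extends each $M_n$ to an analytic function on the strip $\lvert\operatorname{Re}z\rvert<s_0$, uses local boundedness and Vitali's convergence theorem for normal families to propagate convergence from the real interval to the whole strip, in particular to the imaginary axis, and then invokes the L\'evy continuity theorem for characteristic functions. Your approach buys elementarity (no complex analysis beyond the standard MGF-uniqueness fact, which you correctly flag as resting on analyticity of the characteristic function); Curtiss's buys a cleaner one-shot reduction to the classical continuity theorem. Either is a legitimate proof of the cited result.
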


In view of this result by Curtiss, it suffices to show that $M_{W_n}(s)$ converges pointwise to $M_W(s) = e^{s^2/12}$, which is the MGF of $\mathcal{N}\left( 0, \frac{1}{6} \right)$.

In the previous section, we showed that $D_n$ is symmetric about $n$. Hence, $n - D_n$ has the same distribution as $D_n - n$, and so, $M_{W_n}(s) = M_{W_n}(-s)$. Thus, we can assume $s > 0$ without loss of generality. Since $e^{-s/\sqrt{n}} < 1$, we can plug $t = e^{-s/\sqrt{n}}$ into (\ref{4.1}) to get
\begin{align*}
	M_{W_n}(s) &= \frac{e^{-s\sqrt{n}} \left( 1 - e^{-\frac{s}{\sqrt{n}}} \right)^{2n+1}}{(2n-1)!!} \sum_{k=0}^\infty \binom{\frac{k(k+1)}{2}+n-1}{n} e^{-\frac{ks}{\sqrt{n}}} \\
	&= e^{-s\sqrt{n}} \left( \frac{1 - e^{-\frac{s}{\sqrt{n}}}}{\frac{s}{\sqrt{n}}} \right)^{2n+1} \frac{\left( \frac{s}{\sqrt{n}} \right)^{2n+1}}{(2n)!} \sum_{k=0}^\infty \left( \prod_{j=0}^{n-1} \left( k^2 + k + 2j \right) \right) e^{-\frac{ks}{\sqrt{n}}}
\end{align*}

By considering the Taylor expansion $\frac{1 - e^{-x}}{x} = 1 - \frac{x}{2} + \frac{x^2}{6} + O(x^3)$, we see that
\begin{equation*}
	e^{-s\sqrt{n}} \left( \frac{1 - e^{-\frac{s}{\sqrt{n}}}}{\frac{s}{\sqrt{n}}} \right)^{2n+1} = e^{\frac{s^2}{12}} + O\left( n^{-1/2} \right),
\end{equation*}
as $n \rightarrow \infty$, where the implicit bound depends only on $s$. Now, we want to prove

\begin{lemma}\label{lem4.1}
	For each fixed $s > 0$,
	\begin{equation}
		\lim_{n \rightarrow \infty} \frac{\left( \frac{s}{\sqrt{n}} \right)^{2n+1}}{(2n)!} \sum_{k=0}^\infty \left( \prod_{j=0}^{n-1} (k^2 + k + 2j) \right) e^{-\frac{ks}{\sqrt{n}}} = 1.\label{4.2}
	\end{equation}
\end{lemma}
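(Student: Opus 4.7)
The plan is to apply Laplace's method to the sum. Let $u := s/\sqrt n$ and $a_{n,k} := \prod_{j=0}^{n-1}(k^2+k+2j)$. The leading behavior of $a_{n,k}$ for large $k$ is $k^{2n}$, and $k^{2n}e^{-ku}$ is maximized at $k^* := 2n/u = 2n^{3/2}/s$, which tends to infinity. I therefore expect the sum to concentrate on a window around $k^*$ and to be asymptotically equivalent to the cleaner sum $\sum_{k \geq 0} k^{2n} e^{-ku}$.

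The central step is to justify the replacement $a_{n,k} \leadsto k^{2n}$ in the bulk. Factoring
\[
\frac{a_{n,k}}{k^{2n}} = \prod_{j=0}^{n-1}\left(1 + \tfrac{1}{k} + \tfrac{2j}{k^2}\right),
\]
taking logarithms and Taylor-expanding yields $\log(a_{n,k}/k^{2n}) = n/k + n(n-1)/k^2 + O(n^3/k^3)$. For $k$ in a fixed multiplicative neighborhood of $k^*$, say $k \in [k^*/2, 2k^*]$, each term is $O(n^{-1/2})$, so $a_{n,k}/k^{2n} \to 1$ uniformly. For the comparison sum I would use the exact identity $\int_0^\infty x^{2n}e^{-xu}\,dx = (2n)!/u^{2n+1}$ together with an Euler--Maclaurin (or polylogarithm) argument---applicable since the integrand is smooth and concentrated near $x^* = k^* \gg 1$---to conclude $\sum_{k \geq 0} k^{2n}e^{-ku} = (1+o(1))\cdot(2n)!/u^{2n+1}$. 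Combining the two estimates gives $(u^{2n+1}/(2n)!)\sum_{k \in [k^*/2, 2k^*]}a_{n,k} e^{-ku} \to 1$.

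The remaining task, and the main obstacle, is to show that the tails $\sum_{k \notin [k^*/2, 2k^*]}a_{n,k}e^{-ku}$ are negligible. For $k > 2k^*$, the exponential $e^{-ku}$ overwhelms any polynomial factor; using $a_{n,k} \leq (k^2+k+2n)^n$ one compares directly to the Laplace peak value. For $k < k^*/2$, the ratio $a_{n,k}/k^{2n}$ can be very large---for instance $a_{n,1} = 2^n n!$---so the termwise comparison with $k^{2n}$ fails. Here one must bound $a_{n,k}$ directly via $a_{n,k} \leq (k^2+k+2n)^n$ and then observe that Stirling applied to $u^{2n+1}/(2n)!$ shows this prefactor is of order $s^{2n+1}e^{2n}/(2\sqrt{\pi}\cdot 4^n n^{3n+1})$, which suppresses the small-$k$ contribution super-exponentially. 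Making this small-$k$ tail estimate uniform in $n$, especially near $k = 0$ where the approximation $a_{n,k} \sim k^{2n}$ is most dramatically wrong, is the delicate point of the argument.
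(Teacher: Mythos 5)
Your strategy is sound and shares its two essential ingredients with the paper's proof: the observation that $\prod_{j=0}^{n-1}(k^2+k+2j)\approx k^{2n}$ away from small $k$, and the Gamma-integral identity $\int_0^\infty x^{2n}e^{-xu}\,dx=(2n)!/u^{2n+1}$ with $u=s/\sqrt n$. The organization differs in one instructive way. You propose a symmetric two-sided Laplace localization around $k^*=2n/u$, which obliges you to prove a bulk equivalence, an Euler--Maclaurin comparison, and \emph{two} tail estimates. The paper instead gets the lower bound in three lines with no localization at all: since $\prod_{j=0}^{n-1}(k^2+k+2j)\geq x^{2n}$ for every $x\leq k$ (valid for \emph{all} $k$, including the small ones where the approximation by $k^{2n}$ fails in the other direction), the whole sum dominates $e^{-u}\int_0^\infty x^{2n}e^{-xu}\,dx$, giving the exact lower bound $e^{-u}\to 1$. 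Only the upper bound needs the range-splitting, via $\prod_j(k^2+k+2j)\leq k^{2n}e^{n/k+n^2/k^2}$ for $k>\sqrt n$ and a crude factorial bound for $k\leq\sqrt n$. Your route works too, but it is strictly more bookkeeping for the same conclusion.

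Two remarks on the part you leave open. First, you locate the delicate point ``near $k=0$ where the approximation $a_{n,k}\sim k^{2n}$ is most dramatically wrong''; in fact $k$ near $0$ is the \emph{easiest} part of the lower tail, since $\frac{u^{2n+1}}{(2n)!}\,a_{n,1}e^{-u}=\frac{u^{2n+1}}{(2n)!}\,2^n n!\,e^{-u}$ is super-exponentially small by Stirling. The genuinely delicate region is the upper edge of the lower tail, $k$ comparable to $k^*/2$ (equivalently the paper's medium range $\sqrt n<k<\varepsilon n$ after rescaling), where the suppression relative to the peak is only exponential, of order $(\lambda e^{1-\lambda})^{2n}$ with $\lambda=k/k^*$, and one must check it beats the error factor $e^{n^2/k^2}$, which can be as large as $e^n$. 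Second, your proposed bound does close the lower tail uniformly if executed carefully: since $k^2+k+2n\leq(k+\sqrt{2n})^2$, one has $a_{n,k}e^{-ku}\leq e^{u\sqrt{2n}}\,h(k+\sqrt{2n})$ with $h(x)=x^{2n}e^{-xu}$, and the shift costs only the constant $e^{u\sqrt{2n}}=e^{s\sqrt2}$; monotonicity of $h$ below its peak then bounds the entire sum over $k<k^*/2$ by $O\!\left(k^*\cdot\frac{u^{2n+1}}{(2n)!}h(k^*/2+\sqrt{2n})\right)=O\!\left(n\,(e/4)^n\right)\to0$. With that step written out, your argument is complete.
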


\begin{proof}
First, we start with the lower bound. For $k-1 \leq x \leq k$, we have
\begin{equation*}
	e^{-\frac{ks}{\sqrt{n}}} \geq \left( e^{-\frac{s}{\sqrt{n}}} \right)^{x+1},
\end{equation*}
and
\begin{equation*}
	\prod_{j=0}^{n-1} \left( k^2 + k + 2j \right) \geq x^{2n}.
\end{equation*}
Thus,
\begin{align*}
	\frac{\left( \frac{s}{\sqrt{n}} \right)^{2n+1}}{(2n)!} \sum_{k=0}^\infty \left( \prod_{j=0}^{n-1} (k^2 + k + 2j) \right) e^{-\frac{ks}{\sqrt{n}}} &\geq \frac{\left( \frac{s}{\sqrt{n}} \right)^{2n+1}}{(2n)!} e^{-\frac{s}{\sqrt{n}}} \int_0^\infty x^{2n} \left(e^{-\frac{s}{\sqrt{n}}}\right)^x \, dx \\
	&= \frac{ \left( \frac{s}{\sqrt{n}} \right)^{2n+1}}{(2n)!} e^{-\frac{s}{\sqrt{n}}} \frac{(2n)!}{\left( -\log e^{-\frac{s}{\sqrt{n}}} \right)^{2n+1}} \\
	&= e^{-\frac{s}{\sqrt{n}}}
\end{align*}
This provides a uniform lower bound for the left-hand side of (\ref{4.2}).

As for the upper bound, more work is required due to the competition between the size of $n$ and $k$. In order to control the sum, we divide the range of $k$ into 3 ranges and investigate the behavior of the corresponding truncated sums respectively.

\textbf{Small range $(k < \sqrt{n})$.} Note that
\begin{equation*}
	\prod_{j=0}^{n-1} \left( k^2 + k + 2j \right) = k^{2n} \prod_{j=0}^{n-1} \left( 1 + \frac{1}{k} + \frac{2j}{k^2} \right) \leq k^{2n} \left( 2^n \, n! \right).
\end{equation*}

Then, we have
\begin{align*}
	\sum_{k \leq \sqrt{n}} \left( \prod_{j=0}^{n-1} \left( k^2 + k + 2j \right) \right) e^{-\frac{ks}{\sqrt{n}}} &\leq 2^n \, n! \sum_{k \leq \sqrt{n}} k^{2n}e^{-\frac{ks}{\sqrt{n}}} \\
	&\leq 2^n \, n! \sqrt{n} \left( \sqrt{n} \right)^{2n} \\
	&\leq C n^{2n+1} 2^n e^{-n},
\end{align*}
where $C$ is independent of $n$. Thus,
\begin{equation*}
	\frac{\left( \frac{s}{\sqrt{n}}\right)^{2n+1}}{(2n)!} \sum_{k < \sqrt{n}} \left( \prod_{j=0}^{n-1} (k^2 + k + 2j) \right) e^{-\frac{ks}{\sqrt{n}}} \leq C \frac{s^{2n+1} e^n}{n^n 2^n} \rightarrow 0
\end{equation*}
as $n \rightarrow \infty$.

\textbf{Medium range $(\sqrt{n} < k < \varepsilon n)$ and large range $(k > \varepsilon n)$.} Next, we focus on the range $k > \sqrt{n}$. We begin by noting that
\begin{align*}
	\prod_{j=0}^{n-1} \left( k^2 + k + 2j \right) = k^{2n} \prod_{j=0}^{n-1} \left( 1 + \frac{1}{k} + \frac{2j}{k^2} \right) &\leq k^{2n} \exp\left\{ \sum_{j=0}^{n-1} \left( \frac{1}{k} + \frac{2j}{k^2} \right) \right\} \\
	&\leq k^{2n} e^{\frac{n}{k} + \frac{n^2}{k^2}}.
\end{align*}
Then, we get
\begin{align*}
	\frac{\left( \frac{s}{\sqrt{n}} \right)^{2n+1}}{(2n)!} \sum_{k>\sqrt{n}} \left( \prod_{j=0}^{n-1} (k^2 + k + 2j) \right) e^{-\frac{ks}{\sqrt{n}}} &\leq \frac{\left( \frac{s}{\sqrt{n}} \right)^{2n+1}}{(2n)!} \int_{\sqrt{n}}^\infty u^{2n} e^{\frac{n}{u} + \frac{n^2}{u^2}} e^{-\frac{su}{\sqrt{n}}} \, du \\
	&= \frac{1}{(2n)!} \int_s^\infty v^{2n} e^{-v} e^{\frac{s\sqrt{n}}{v} + \frac{s^2n}{v^2}} \, dv.
\end{align*}
The last integral is close to what we want, but we need control over the undesirable exponential term.

For the medium range $\sqrt{n} < k < \varepsilon n$, note that $v \mapsto v^{2n}e^{-v}$ is increasing on $(0,2n)$. Thus, there exists an absolute $C > 0$ such that
\begin{align*}
	\frac{1}{(2n)!} \int_s^{\varepsilon n} v^{2n} e^{-v} e^{\frac{s\sqrt{n}}{v} + \frac{s^2n}{v^2}} \, dv &\leq \frac{1}{(2n)!} (\varepsilon n)^{2n+1} e^{-\varepsilon n} e^{\sqrt{n} + n} \\
	&\leq C \sqrt{n} \varepsilon^{2n+1} e^{\sqrt{n} + (3 - \log 4 - \varepsilon)n},
\end{align*}
which decays exponentially for $\varepsilon > 0$ sufficiently small, i.e. $\varepsilon^2 \cdot \frac{e^3}{4} < 1$.

For the large range $k > \varepsilon n$,
\begin{align*}
	\frac{1}{(2n)!} \int_{\varepsilon n}^\infty v^{2n} e^{-v} e^{\frac{s\sqrt{n}}{v} + \frac{s^2n}{v^2}} \, dv &\leq \frac{1}{2n!} e^{\frac{s}{\varepsilon \sqrt{n}} + \frac{s^2}{\varepsilon^2 n}} \int_{\varepsilon n}^\infty v^{2n} e^{-v} \, dv \\
	&\leq 1 + O\left(n^{-\frac{1}{2}}\right).
\end{align*}

Combining the three calculations, we obtain the bound
\begin{equation*}
	\frac{\left( \frac{s}{\sqrt{n}} \right)^{2n+1}}{(2n)!} \sum_{k=0}^\infty \left( \prod_{j=0}^{n-1} \left( k^2+k+2j \right) \right) e^{-\frac{ks}{\sqrt{n}}} \leq 1 + o(1),
\end{equation*}
and so, the lemma is proven.
\end{proof}

Combining Theorem \ref{thm4.1} and Lemma \ref{lem4.1}, we obtain the desired result:

\begin{theorem}
Since $M_{W_n}(s)$ converges to $M_W(s)$ pointwise, $W_n$ converges to $W$ in distribution, i.e.
\begin{equation*}
	W_n \Rightarrow \mathcal{N}\left( 0, \frac{1}{6} \right).
\end{equation*}
\end{theorem}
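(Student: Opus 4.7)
The plan is simply to assemble the two ingredients that have already been established in the preceding discussion. Recall the factorization derived just before Lemma \ref{lem4.1}:
\begin{equation*}
M_{W_n}(s) = \left[ e^{-s\sqrt{n}} \left( \frac{1 - e^{-s/\sqrt{n}}}{s/\sqrt{n}} \right)^{2n+1} \right] \cdot \left[ \frac{(s/\sqrt{n})^{2n+1}}{(2n)!} \sum_{k=0}^\infty \left( \prod_{j=0}^{n-1} (k^2 + k + 2j) \right) e^{-ks/\sqrt{n}} \right].
\end{equation*}
For any fixed $s > 0$, the Taylor-expansion computation already carried out shows that the first bracket tends to $e^{s^2/12}$, while Lemma \ref{lem4.1} shows that the second bracket tends to $1$. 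Multiplying these limits gives $M_{W_n}(s) \to e^{s^2/12}$ for every $s > 0$.

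To extend the pointwise convergence to a full neighborhood of $0$, I would invoke the symmetry $M_{W_n}(s) = M_{W_n}(-s)$ that was noted earlier as a consequence of Theorem \ref{thm3.1} (namely that $D_n$ is symmetric about $n$). This yields the same limit for $s < 0$, and the case $s = 0$ is trivial. Hence on any interval $(-s_0, s_0)$ we have $M_{W_n}(s) \to e^{s^2/12}$ pointwise, and the target function $e^{s^2/12}$ is the MGF of $\mathcal{N}(0, 1/6)$ since a $\mathcal{N}(0, \sigma^2)$ variable has MGF $e^{\sigma^2 s^2 / 2}$ and $\sigma^2/2 = 1/12$ gives $\sigma^2 = 1/6$.

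At this point Curtiss's theorem (Theorem \ref{thm4.1}) applies directly and yields $W_n \Rightarrow \mathcal{N}(0, 1/6)$. There is really no obstacle left at this stage: all the genuine work has been done in Lemma \ref{lem4.1} and in the Taylor analysis of the prefactor, so the proof of this concluding theorem is essentially a one-line assembly. The only small points worth verifying are that the convergence does hold on a genuine open interval around $0$ (immediate from the symmetry argument above) and that the limit function is indeed an MGF of an identifiable distribution (immediate from the Gaussian MGF formula).
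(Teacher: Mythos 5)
Your proposal is correct and follows the paper's argument exactly: the paper likewise obtains this theorem by combining the Taylor-expansion limit of the prefactor, Lemma \ref{lem4.1} for the remaining sum, the symmetry of $D_n$ about $n$ to handle $s<0$, and Curtiss's theorem (Theorem \ref{thm4.1}). Your write-up is simply a more explicit version of the paper's one-line assembly.
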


Note that this is consistent with the result from the previous section where we found that the variance of $W_n$ is $\frac{n+3}{6}$.

\section{Acknowledgements}

The author would like to thank Brendon Rhoades for introducing the author to the world of Young tableaux and helping come up with the bijection in Section~3. The author would also like to thank Sangchul Lee for the suggestion of considering moment generating functions instead of characteristic functions and helping with the calculations of bounding the integrals in Section~4. Finally, the author would like to thank Jason Fulman for the suggestion of the problem and his guidance.

\bibliographystyle{amsplain}

\begin{thebibliography}{10}

\bibitem {Bayer} D. Bayer and P. Diacnois, \textit{Trailing the dovetail to its lair}, The Annals of Applied Probability, \textbf{2} (1992), no.2, 294--313.

\bibitem {Chatterjee} S. Chatterjee and P. Diaconis, \textit{A central limit theorem for a new statistic on permutations}, to appear in a special issue of Indian J. Pure App. Math.

\bibitem {Chen} W. Chen, E. Deng, R. Du, R. Stanley, and C. Yan, \textit{Crossings and nestings of matchings and partitions}, Transactions of the American Mathematical Society, \textbf{359} (2007), no.4, 1555--1575.

\bibitem {Curtiss} J. H. Curtiss, \textit{A note on the theory of moment generating functions}, The Annals of Mathematical Statistics, \textbf{13} (1942), no.4, 430--433.

\bibitem {Diaconis1} P. Diaconis, M. McGrath, and J. Pitman, \textit{Riffle shuffles, cycles, and descents}, Combinatorica, \textbf{15} (1995), no.1, 11--29.

\bibitem {Diaconis2} P. Diaconis and J. Pitman, \textit{Unpublished notes on descents}

\bibitem {Fulman1} J. Fulman, \textit{The distribution of descents in fixed conjugacy classes of the symmetric group}, Journal of Combinatorial Theory Series A, \textbf{84} (1998), no.2, 171--180.

\bibitem {Fulman2} J. Fulman, \textit{Stein's method and non-reversible Markov chains}, Stein's Method: Expository Lectures and Applications, IMS Lecture Notes Monogr. Ser., \textbf{46} (2004), 69--77.

\bibitem {Fulton} W. Fulton, \textit{Young tableaux}, Cambridge University Press, Cambridge, U.K. (1997)

\bibitem {Garsia} A. M. Garsia and I. Gessel, \textit{Permutation statistics and partitions}, Adv. Math, \textbf{31} (1979), no.3, 288--305.

\bibitem {Gessel} I. Gessel and C. Reutenauer, \textit{Counting permutations with given cycle structure and descent set}, Journal of Combinatorial Theory Series A, \textbf{64} (1993), no.2, 189--215.

\bibitem {Goldstein} L. Goldstein and Y. Rinott, \textit{A permutation test for matchings and its asymptotic distrbution}, Metron: International Journal of Statistics, \textbf{61} (2004), no.3, 375--388.

\bibitem {Harper} L. H. Harper, \textit{Stirling behavior is asymptotically normal}, Annals of Mathematical Statistics \textbf{38} (1966), 410--414.

\bibitem {Knuth} D. Knuth, \textit{The art of computer programming, Vol. 3. Sorting and searching}, Addison-Wesley, Reading, MA. (1973)

\bibitem {Petersen} T. K. Petersen, \textit{Eulerian numbers}, Birkh{\"a}user-Springer, New York, NY. (2015)

\bibitem {Sundaram} S. Sundaram, \textit{The Cauchy identity for $Sp(2n)$}, Journal of Combinatorial Theory Series A, \textbf{53} (1990), no.2, 209--238.

\end{thebibliography}

\end{document}